\newtheorem{Thm}{Theorem}[section]
\newtheorem{Algo}[Thm]{Algorithm}
\newtheorem{Lem}[Thm]{Lemma}
\newtheorem{Def}[Thm]{Definition}
\newtheorem{Cor}[Thm]{Corollary}
\newtheorem{Prop}[Thm]{Proposition}
\newtheorem{Ex}[Thm]{Example}
\newtheorem{Rem}[Thm]{Remark}
\title{Positive definite matrices in the view-points of planar networks and cluster subalgebras}
\author{Fang Li $\;\;\;\;\;\;$ Yichao Yang}
\address{Fang Li
\newline Department
of Mathematics, Zhejiang University (Yuquan Campus), Hangzhou, Zhejiang
310027, P.R.China}
\email{fangli@zju.edu.cn}
\address{Yichao Yang
\newline Department
of Mathematics, Zhejiang University (Yuquan Campus), Hangzhou, Zhejiang
310027,  P.R.China}
\email{yyc880113@163.com}
\date{version of \today}
\newcommand{\lra}{\longrightarrow}
\newcommand{\ra}{\rightarrow}
\newcommand{\sdp}{\times\kern-.2em\vrule height1.1ex depth-.05ex}
\newcommand{\epi}{\lra \kern-.8em\ra}
\begin{document}

\renewcommand{\thefootnote}{\alph{footnote}}
\setcounter{footnote}{-1} \footnote{\emph{Mathematics Subject
Classification(2010)}: 15B48, 15B57, 13F60, 05E15.}
\renewcommand{\thefootnote}{\alph{footnote}}
\setcounter{footnote}{-1} \footnote{ \emph{Keywords}:  positive definite matrix, generalized Jacobi matrix, planar network, double wiring diagram, cluster subalgebra}

\begin{abstract}
As an improvement of the combinatorial realization of totally positive matrices via the essential positive weightings of certain planar network by
 S.Fomin and A.Zelevisky \cite{[4]}, in this paper, we give the test method of positive definite matrices via the planar networks and the so-called cluster subalgebra respectively, introduced here originally.

This work firstly gives a combinatorial realization of all matrices through planar network, and then sets up a test method for positive definite matrices by $LDU$-decompositions and the horizontal weightings of all lines in their planar networks. On the other hand, mainly the relationship is built between
positive definite matrices and cluster subalgebras.
\end{abstract}

\maketitle
\bigskip

\section{Introduction}
Totally positive matrices and positive definite matrices are two important classes of matrices with certain (partial) positivity. S.Fomin and A.Zelevisky \cite{[4]} gave a combinatorial realization of totally positive matrices via the essential positive weightings of certain planar network and show that in order to test the totally positivity of a matrix, it is enough to check the totally positivity of chamber minors associated to the double wiring system due to  the theory of cluster algebras.

In this paper, we improve this method in \cite{[4]} to matrices with partial positivity.
 We firstly realize all matrices via generalized elementary Jacobi matrices in Proposition \ref{thm2.5} and then test other classes of matrices which share some properties about ``positivity'', e.g., positive definite matrices via the planar networks and the so-called cluster subalgebra respectively, introduced here originally.

 On one hand, it is proven that an invertible Hermitian matrix is positive definite if and only if the weightings of all horizontal lines in its corresponding planar network  are positive in Proposition \ref{prop2.6} and Corollary \ref{maincor}. On the other hand, for any $n\times n$ Hermitian matrix $M$, in Theorem \ref{thm3.3} we prove that one can construct a double wiring system $\Omega$ and a certain subset $S$ of $Q(\Omega)_{0,m}$ such that the matrix $M$ is positive definite if and only if all minors corresponds to any extended cluster of the cluster subalgebra $\mathcal{A'}_{S}$ of $\mathcal{A}_{\Omega}$ on $M$ are positive.
In the last section, we give a summary to explain the relations between the methods of planar networks and cluster subalgebras through double wiring diagrams as a bridge.

This work demonstrates the meaning of the concept of cluster subalgebra for studying some ``partial positivity"  of matrices. In further research, we will be able to use the notion of cluster subalgebras to relate some other important partial positivity of matrices with the theory of cluster algebras and  the combinatorial method.

\section{Preliminaries on some concepts and results}

\subsection{ Totally positive matrices and positive definite matrices.\\}

In linear algebra, matrix theory and their applications, two classes of matrices, that is, totally positive matrices and positive definite matrices,  are very important. Let us recall their definitions.

A square matrix over the complex field $\mathds C$ is said to be \emph{totally positive} if all of its minors are positive real numbers. The first systematic study on this class of matrices was undertaken in the $1930$s by F.R.Gantmacher and M.G.Krein \cite{[5]}, where some remarkable spectral properties were  established, e.g., an $n\times n$ totally positive matrix has $n$ distinct positive eigenvalues. For more details, see \cite{[5]},\cite{[6]},\cite{[7]}.

 A $n\times n$ Hermitian matrix $M$ is said to be \emph{positive definite} if $z^{*}Mz$ is always a positive real number for any $0\not=z\in \mathds C^n$, where  $z^{*}=\bar{z}^\top$. The class of positive definite matrices plays an important role since it is closely related to positive-definite symmetric bilinear forms and inner product vector spaces.

 It is well-known that a Hermitian matrix $M$ is positive definite if and only if all leading principal minors of $M$ are positive. Hence if a totally positive matrix $M$ is Hermitian, then it is obviously positive definite.

\subsection{ The matrices associated with planar networks.\\}

In \cite{[4]}, S.Fomin and A.Zelevinsky gave the combinatoric way to realize all the totally positive matrices via the planar network as follows.

A \emph{planar network} $(\Gamma,\omega)$ is an acyclic directed planar
graph $\Gamma$ whose edges $e$ are assigned
the scalar weights $\omega(e)$. We always assume the edges of $\Gamma$ to be directed from left to right. Assume each network has $n$ sources and $n$
sinks located at the left and right sides in the picture respectively, and
numbered in bottom-to-top.

Now we can associate a planar network $(\Gamma,\omega)$ a matrix $x(\Gamma,\omega)$, which is called the {\em weight matrix}, more precisely, $x(\Gamma,\omega)$ is an $n\times n$ matrix whose $(i,j)$-entry is the sum of weights of all paths from the source $i$ to the sink $j$, where the weight of a (directed) path in $\Gamma$ is defined as the product of weights of all edges in this path. For examples, Figure 1 below includes two planar networks.
\begin{figure}[h] \centering
  \includegraphics*[110,477][540,585]{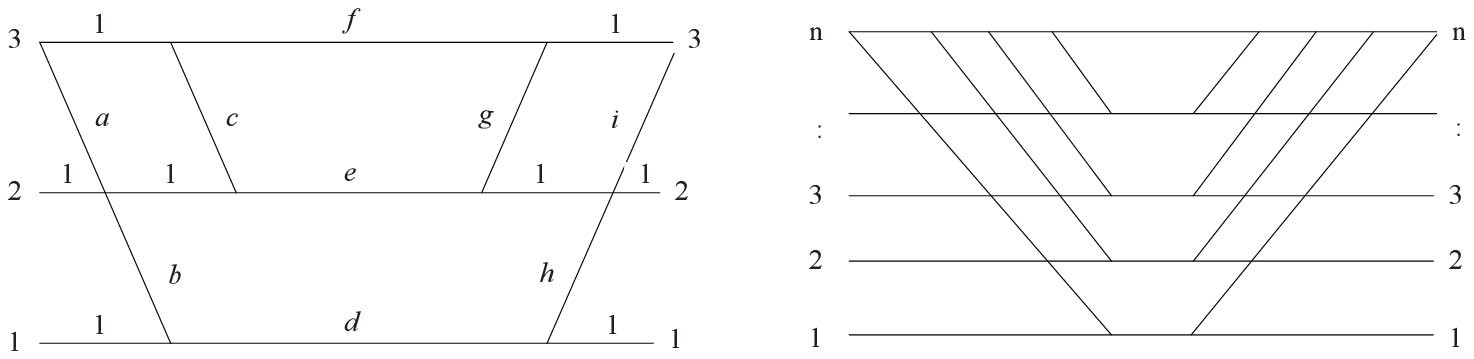}

 Figure 1
\end{figure}

The corresponding weight matrix of the planar network on the left of Figure $1$ is written as below:
\[  \begin{matrix}
  \begin{pmatrix}
   d  &  dh  &  dhi  \\
   bd  &  bdh+e  &  bdhi+eg+ei  \\
   abd  &  abdh+ae+ce  &  abdhi+(a+c)e(g+i)+f
  \end{pmatrix}
  \end{matrix}
\]

In order to calculate minors of the weight matrix, the following lemma is useful, where $\Delta_{I,J}(x)$ denotes the minor of a matrix $x$ with the row set $I$ and the column set $J$ and the weight of a collection of directed paths in $\Gamma$ is defined to be the product of their weights.

\begin{Lem}{\rm (Lemma 1, \cite{[4]})}\label{lem1.1}
{\rm A minor $\Delta_{I,J}(x)$ of the weight matrix of a planar network
is equal to the sum of weights of all collections of vertex-disjoint
paths that connect the sources labeled by $I$ with the sinks labeled
by $J$.}
\end{Lem}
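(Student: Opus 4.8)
The plan is to prove this as the classical Lindström–Gessel–Viennot lemma: first expand the minor as a signed sum over families of paths, then show the vertex-disjoint families all carry a plus sign, and finally cancel every remaining (intersecting) family by a sign-reversing involution. Fix $I=\{i_1<\cdots<i_k\}$ and $J=\{j_1<\cdots<j_k\}$ and write, by the Leibniz expansion,
\[
\Delta_{I,J}(x)=\sum_{\sigma\in\Sn_k}\operatorname{sgn}(\sigma)\prod_{a=1}^{k}x_{i_a,\,j_{\sigma(a)}}.
\]
Since by definition each entry $x_{i_a,j_b}$ is the sum over all directed paths from the source $i_a$ to the sink $j_b$ of the product of the weights of the edges on that path, distributing the product rewrites the right-hand side as a signed, weighted sum over all path families $\mathcal{P}=(P_1,\dots,P_k)$, where $P_a$ runs from $i_a$ to $j_{\sigma(a)}$ for some permutation $\sigma=\sigma(\mathcal{P})$; the family $\mathcal{P}$ contributes $\operatorname{sgn}(\sigma)$ times the product of all edge weights occurring in it.

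Next I would separate these families into the vertex-disjoint ones and the rest. For the vertex-disjoint families I would invoke planarity: because the sources and the sinks are listed in bottom-to-top order on the left and right sides of the plane, two vertex-disjoint paths connecting $I$ with $J$ cannot cross, so such a family can realize only the order-preserving matching $\sigma=\mathrm{id}$, whose sign is $+1$. Hence each vertex-disjoint family contributes its weight with a positive sign, and the sum of these contributions is precisely the quantity on the right-hand side of the lemma.

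It then remains to show that the families in which at least two paths share a vertex contribute nothing, which I would do by a sign-reversing, weight-preserving involution. Given such a family, choose the smallest index $a$ for which $P_a$ meets another path, let $v$ be the first (closest to the source) vertex of $P_a$ lying on another path, let $P_b$ be the path through $v$ of smallest index, and swap the portions of $P_a$ and $P_b$ lying after $v$. This operation preserves the multiset of edges, hence the weight, while replacing $\sigma$ by $\sigma\circ(a\,b)$, so it reverses $\operatorname{sgn}(\sigma)$; as an involution on intersecting families it pairs them up with opposite signs, so they cancel in the sum. Combining this with the previous paragraph proves the lemma.

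The main obstacle will be making the tail-swap genuinely well defined and involutive: one must pin down the distinguished intersection point so canonically that applying the construction twice recovers the original family, i.e.\ that after the swap the smallest intersecting index is still $a$, the first shared vertex on the $a$-th path is still $v$, and the partner path is still $P_b$, so that the map is an honest involution rather than a mere pairing. The second delicate point is the planarity claim itself — verifying that, for vertex-disjoint families, an order-reversing matching of $I$ with $J$ is topologically impossible in the plane — which requires a genuine no-crossing argument rather than a computation.
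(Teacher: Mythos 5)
Your proof is correct and is essentially the paper's own route: the paper states this lemma as a direct citation of Lemma 1 of \cite{[4]} without reproving it, and the Lindstr\"om--Gessel--Viennot argument you give --- Leibniz expansion of $\Delta_{I,J}$, the planarity (Jordan-curve) observation that a vertex-disjoint family connecting sources and sinks ordered bottom-to-top can only realize $\sigma=\mathrm{id}$, and the weight-preserving, sign-reversing tail-swap involution on intersecting families --- is exactly the classical proof behind the cited result. The two delicate points you flag do resolve with your canonical choices: since the graph is acyclic (paths are self-avoiding) and the swap leaves unchanged both the total vertex set of the family and the initial segment of $P_a$ up to $v$, the smallest intersecting index is still $a$, the first shared vertex is still $v$, and the set of paths through $v$ is unchanged so the smallest-index partner is still $P_b$, making the map an honest involution.
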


\begin{Rem}\label{rem1.2}
{\rm For the planar network in the above example, or more generally the planar network $(\Gamma,\omega)$ with the form ``$\backslash\backslash -- //$'', the leading principal minors of the weight matrix $x(\Gamma,\omega)$ can be calculated by Lemma \ref{lem1.1} as follow:
\begin{equation}\label{remark1.2}
\Delta_{1,1}=\prod_{i=1}^{t_{1}}\omega_{1i}, \Delta_{12,12}=\prod_{i=1}^{t_{1}}\omega_{1i}\prod_{i=1}^{t_{2}}\omega_{2i},\cdots, \Delta_{12\cdots n,12\cdots n}=\prod_{i=1}^{t_{1}}\omega_{1i}\prod_{i=1}^{t_{2}}\omega_{2i}\cdots \prod_{i=1}^{t_{n}}\omega_{ni}
 \end{equation}
 where the horizontal weightings $\omega_{k1},\omega_{k2},\cdots,\omega_{kt_{k}}$ appear in Line $k$ for $k=1,2,\cdots,n$ respectively. Indeed, the collections of vertex-disjoint paths that connect the sources labeled by $\{$$12\cdots s$$\}$ with the sinks labeled by $\{$$12\cdots s$$\}$ $1\leq s\leq n$ must be horizontal, then the results (\ref{remark1.2}) follows.}
\end{Rem}

In order to state the one-to-one correspondence between the set of totally
positive matrices and that of all positive weightings of a fixed planar
network, we need more notations.

Denote the planar network on the right of Figure $1$ as $\Gamma_{0}$ and call an edge of $\Gamma_{0}$ \emph{essential} if it either is slanted or is one of the $n$ horizontal edges in the middle of $\Gamma_{0}$. Note that $\Gamma_{0}$ has exactly $2\times(1+2+\cdots+(n-1))+n=n^2$ essential edges. Moreover, a weighting $\omega$ of $\Gamma_{0}$ is called \emph{essential} if $\omega(e)\neq 0$ for any essential edge $e$ and $\omega(e)=1$ for all other edges.

The following theorem in \cite{[4]} characterizes all totally positive
$n\times n$ matrices via the essential positive weightings of the planar network.
\begin{Thm}{\rm (Theorem 5, \cite{[4]})}\label{thm1.2}
{\rm The map $\omega \mapsto x(\Gamma_{0},\omega)$ restricts to a bijection between the set of all essential positive weightings of $\Gamma_{0}$ and that of all totally positive
$n\times n$ matrices.}
\end{Thm}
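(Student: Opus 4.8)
The plan is to establish the three properties that together constitute the claimed bijection: that $\omega\mapsto x(\Gamma_0,\omega)$ actually lands in the set of totally positive matrices, that it is injective, and that it is surjective. Both the source and the target are $n^2$-dimensional (the essential positive weightings form $(\R_{>0})^{n^2}$ since $\Gamma_0$ has exactly $n^2$ essential edges, while the totally positive $n\times n$ matrices form an open subset of $\R^{n^2}$), so a bijection is at least plausible on dimension grounds. Throughout I would exploit that $\Gamma_0$ decomposes horizontally into elementary slices, one for each essential edge, so that by the multiplicativity of weight matrices under horizontal concatenation of networks, $x(\Gamma_0,\omega)$ factors as a product of elementary bidiagonal (Jacobi) matrices times a diagonal matrix: the lower-slanted (``$\backslash\backslash$'') weights contribute the subdiagonal elementary factors, the middle horizontal weights contribute the diagonal matrix, and the upper-slanted (``$//$'') weights contribute the superdiagonal factors.

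For well-definedness I would invoke Lemma \ref{lem1.1}: every minor $\Delta_{I,J}\big(x(\Gamma_0,\omega)\big)$ equals the sum of the weights of all families of vertex-disjoint source-to-sink paths joining the sources $I$ to the sinks $J$, and each such weight is a product of positive edge weights. It then remains to verify the purely combinatorial fact that, for every pair $I,J$ of equal cardinality, at least one vertex-disjoint family exists in $\Gamma_0$; this follows directly from the $\backslash\backslash -- //$ shape. Granting it, every minor is a sum of strictly positive monomials, hence strictly positive, so $x(\Gamma_0,\omega)$ is totally positive.

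For injectivity I would invert the factorization explicitly. Peeling the elementary factors off one slice at a time, in the order dictated by the shape of $\Gamma_0$, one solves for each essential weight as a ratio of minors of the current matrix; since those intermediate matrices are themselves determined by $x$, this exhibits each $\omega(e)$ as a subtraction-free Laurent-monomial expression in the minors of $x$, so $\omega$ is recovered uniquely from $x$. In particular the diagonal (middle horizontal) weights are read off from the leading principal minors exactly as computed in Remark \ref{rem1.2}, and the slanted weights come from the remaining chamber minors. This recovery map gives injectivity at once.

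For surjectivity I would run the same elimination starting from an arbitrary totally positive $x$: the recovered parameters are the very same ratios of minors, and because \emph{all} minors of a totally positive matrix are positive, each recovered weight is automatically positive, so reassembling the slices yields an essential positive weighting whose image is $x$. The main obstacle is precisely this last step, namely guaranteeing that the elimination can be carried to completion with every intermediate minor remaining positive, i.e. that the bidiagonal factorization of a totally positive matrix exists, is unique, and has strictly positive parameters. This is the analytic heart of the theorem and is where the combinatorics of $\Gamma_0$ supplied by Lemma \ref{lem1.1} must be matched against the fundamental factorization theory of totally positive matrices; a clean way to secure it is to show that each recovered weight is a ratio of two contiguous (``solid'') minors and that total positivity keeps both numerator and denominator positive at every stage of the peeling.
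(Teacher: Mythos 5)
Note first that the paper itself contains no proof of this statement: Theorem \ref{thm1.2} is imported verbatim as Theorem 5 of \cite{[4]}, so the only meaningful benchmark is Fomin and Zelevinsky's original argument. Your outline follows that classical route: the factorization of $\Gamma_0$ into elementary ``chips'' giving a product of bidiagonal and diagonal factors, Lemma \ref{lem1.1} for well-definedness (every minor is a subtraction-free sum of positive monomials, together with the easy combinatorial check that each pair $(I,J)$ with $|I|=|J|$ admits at least one vertex-disjoint family in $\Gamma_0$, which you correctly reduce to the ``$\backslash\backslash -- //$'' shape), and recovery of the $n^2$ essential weights as ratios of minors, with the middle horizontal weights read off from leading principal minors exactly as in Remark \ref{rem1.2}. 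Up to that point the proposal is sound and matches the source.

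The genuine gap is the one you flag yourself: surjectivity. Writing ``run the same elimination starting from an arbitrary totally positive $x$'' presupposes precisely what must be proved, namely that every totally positive matrix admits a complete bidiagonal factorization with strictly positive parameters --- the Loewner--Whitney/Cryer factorization theorem --- and that the elimination never divides by zero. Your proposed fix, ``show that each recovered weight is a ratio of two contiguous minors and that total positivity keeps both positive at every stage of the peeling,'' is a restatement of the goal rather than an argument: after one peeling step the intermediate matrix is no longer totally positive in general, only certain solid minors of it are controlled, so the inductive hypothesis must be formulated and verified with care, and you give no such verification. Fomin and Zelevinsky close exactly this gap by exhibiting closed-form subtraction-free formulas expressing every essential weight of $\Gamma_0$ as a ratio of products of solid minors of $x$ itself (not of intermediate matrices), and checking that these formulas define a two-sided inverse of $\omega \mapsto x(\Gamma_0,\omega)$; positivity of the recovered weights is then immediate from total positivity of $x$, with no stage-by-stage tracking needed. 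As written, your proposal establishes well-definedness and, granting the solvability of the elimination on the image, uniqueness of the weighting, but not the stated bijection; the dimension count $n^2$ versus $n^2$ is of course only heuristic and cannot substitute for the missing existence step.
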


\subsection{ Cluster algebras.\\}

The original motivation of cluster algebras was to give the combinatorial characterizations of the dual canonical basis of the quantized enveloping algebra of a quantum group and of the total positivity for algebraic groups. Now, it plays a prominent role in cluster theory and arises in connection with integrable systems, algebraic Lie theory, Poisson geometry, total positivity, representation theory, combinatorics and etc.

Throughout this paper, we only restrict our attention to cluster algebras given by the so-called \emph{cluster quivers} $Q$, which are directed (multi-)graphs with no loops and no $2$-cycles. Some vertices of $Q$ are denoted as \emph{mutable} and the remaining ones are called \emph{frozen}. All the terminologies are collected in the following definition. For more details, see \cite{[2]}.

\begin{Def}\label{def1.3}
{\rm (i)~ Let $v$ be a mutable vertex of a cluster quiver $Q$. The \emph{quiver mutation} of $Q$ at $v$ is an operation that produces another quiver $Q'=\mu_{v}(Q)$ via the following three steps:

(1)~ Add a new edge $u\rightarrow w$ for each pair of edges $u\rightarrow v$, $v\rightarrow w$ in $Q$, except in the case when both $u,w$ are frozen.

(2)~ Reverse the direction of all edges adjacent to $v$.

(3)~ Remove $2$-cycles until none remains.

(ii)~ Let $\mathcal{F}\supset \mathds{R}$ be any field. We say that a pair $t=(Q,{\bf z})$ is a \emph{seed} in $\mathcal{F}$ if $Q$ is a cluster quiver and {\bf z}, called the \emph{extended cluster}, is a set consisting of some algebraically independent elements of $\mathcal{F}$, as many as the vertices of the quiver $Q$.

(iii)~ The elements of {\bf z} corresponding to the mutable vertices in $Q$ are called \emph{cluster variables} and those corresponding to the frozen vertices in $Q$ are called \emph{frozen variables}.

(iv)~ The \emph{seed mutation} $\mu_z$ at a cluster variable $z$ transforms the seed $t=(Q,{\bf z})$ into a new seed $t'=(Q',{\bf z'})=\mu_{z}(Q,{\bf z})$, where $Q'$ is the quiver resulted after mutating $Q$ at the vertex corresponding to $z$ and ${\bf z'}={\bf z}\cup \{z'\}\backslash \{z\}$, where the new variable $z'$ is subject to the \emph{mutation relation at $z$} via $zz'=\prod \limits_{z\leftarrow x} x+ \prod \limits_{z\rightarrow y} y$. One sometimes calls this relation the \emph{mutation relation at $v$} for the vertex $v$ of $Q$ corresponding to $z$.

(v)~ Two seeds $t=(Q,{\bf z})$ and $t'=(Q',{\bf z'})$ are said to be \emph{mutation-equivalent} if one of them can be obtained from the other after a sequence of seed mutations.

 (vi)~ The \emph{cluster algebra} $\mathcal{A}(Q,{\bf z})$ generated by an \emph{initial seed} $t=(Q,{\bf z})$ is defined as the subring of $\mathcal{F}$ generated by the elements of extended clusters that are mutation-equivalent to $t$.}
\end{Def}

\bigskip
\section{Characterization of positive definite matrices via planar networks.\\}

\subsection{Realization of matrices via generalized elementary Jacobi matrices.\\}

In this section, we would like to realize all $n\times n$ matrices via certain planar networks and then give a way to test whether the matrix is positive definite or not.

For any $n\times n$ matrix $M$, we attempt to find a planar network $(\Gamma,\omega)$ such that its weight matrix $x(\Gamma,\omega)=M$. For this aim, we first consider the case for invertible matrices.

Since any invertible matrix can be written as a product of elementary matrices, let us recall the definition of elementary Jacobi matrices and their connection with the planar networks as in \cite{[4]}.

\begin{Def}\label{def2.1}

{\rm (i)~  Let $E_{i,j}$ be the $n\times n$ matrix whose $(i,j)$-entry is $1$ and all other entries are $0$. For $i=1,2,\cdots,n-1$ and $t\in \mathds{C}$, let $x_{i}(t)=I+tE_{i,i+1}$, $x_{\overline{i}}(t)=I+tE_{i+1,i}$. For $i=1,2,\cdots,n$ and $t\in \mathds{C}\backslash \{0\}$, let $x_{\textcircled{i}}(t)=I+(t-1)E_{i,i}$.

 (ii)~  The \emph{elementary Jacobi matrices} are  matrices $x\in GL_{n}(\mathds{C})$ that differ from the identity matrix $I_n$ in a single entry located either on the main diagonal or immediately above or below it, more precisely, those are ones of the forms  $x_{i}(t),x_{\overline{i}}(t),x_{\textcircled{i}}(t'),~ \forall t\in \mathds{C},t'\in \mathds{C}\backslash \{0\}$.}
\end{Def}

\begin{Lem}{\rm (Theorem 12, \cite{[4]})}\label{lem2.2}
{\rm The three classes of elementary Jacobi matrices are weight matrices of the
following three ``chip''$($Seeing Figure $2$ below$)$, respectively.
In each ``chip'', all edges but one have weight $1$; the distinguished edge
has weight $t$. Slanted edges connect horizontal levels $i$ and $i+1$,
counting from the bottom. Additionally, the concatenation of these ``chip''s corresponds to multiplying their weight matrices.}
\end{Lem}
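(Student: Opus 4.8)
The plan is to establish the two assertions separately: first that each elementary Jacobi matrix is realized as the weight matrix of the corresponding chip, and second that concatenating chips (indeed arbitrary planar networks) matches the product of their weight matrices. For the first assertion I would apply Lemma \ref{lem1.1} to the individual entries, which are the $1\times1$ minors. Consider the chip for $x_{i}(t)$, consisting of $n$ horizontal levels with all horizontal edges of weight $1$ together with a single upward slanted edge of weight $t$ joining level $i$ to level $i+1$. Reading off the source-to-sink paths: for every $k$ the horizontal path from source $k$ to sink $k$ has weight $1$, and the only non-horizontal contribution is the path from source $i$ that takes the slanted edge and lands at sink $i+1$, of weight $t$; since no source can descend (edges run left to right), no other entry is produced. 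Hence the weight matrix is $I+tE_{i,i+1}=x_{i}(t)$. The chip for $x_{\overline{i}}(t)$ is treated identically, replacing the upward slant by a downward slant from level $i+1$ to level $i$, giving the entry $t$ in position $(i+1,i)$; and the chip for $x_{\textcircled{i}}(t)$ is the all-horizontal network whose level-$i$ edge carries weight $t$ and all others weight $1$, so its weight matrix is $I+(t-1)E_{i,i}=x_{\textcircled{i}}(t)$. These are three short path enumerations.

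For the second assertion, let $\Gamma'$ and $\Gamma''$ be planar networks, each with $n$ sources and $n$ sinks, and let $\Gamma=\Gamma'\Gamma''$ denote their concatenation, obtained by gluing the $k$-th sink of $\Gamma'$ to the $k$-th source of $\Gamma''$ for every $k$. The key observation is that, because all edges are directed from left to right, the $n$ glued vertices form a vertical cut separating the sources of $\Gamma$ from its sinks: any directed path $P$ in $\Gamma$ from source $i$ to sink $j$ meets this cut in exactly one vertex $k$, and therefore factors uniquely as $P=P'P''$ with $P'$ a path from source $i$ to vertex $k$ inside $\Gamma'$ and $P''$ a path from vertex $k$ to sink $j$ inside $\Gamma''$. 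Since the weight of a path is the product of its edge weights, $\omega(P)=\omega(P')\,\omega(P'')$, and summing over all such $P$ (equivalently over $k$ and over the pairs $(P',P'')$) gives
\[
x(\Gamma)_{i,j}=\sum_{k=1}^{n}\Big(\sum_{P'\colon i\to k}\omega(P')\Big)\Big(\sum_{P''\colon k\to j}\omega(P'')\Big)=\sum_{k=1}^{n}x(\Gamma')_{i,k}\,x(\Gamma'')_{k,j},
\]
which is precisely $\big(x(\Gamma')\,x(\Gamma'')\big)_{i,j}$. Thus $x(\Gamma'\Gamma'')=x(\Gamma')\,x(\Gamma'')$, and in particular concatenating the three chips multiplies the corresponding elementary Jacobi matrices.

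The routine part is the path bookkeeping in the first assertion; the one point deserving care is the unique-crossing claim used in the second, namely that the left-to-right orientation (hence acyclicity) forces every source-to-sink path to hit the glued boundary exactly once, so that the decomposition $P=P'P''$ is a genuine bijection and no overcounting or undercounting occurs. Once this is established the matrix-multiplication identity follows immediately.
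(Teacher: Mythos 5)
The paper offers no proof of Lemma \ref{lem2.2} at all: it is imported verbatim as Theorem 12 of \cite{[4]} and used as a black box, so there is no internal argument to compare yours against. Your proof is correct and is essentially the standard one from the cited reference. The three path enumerations are right: in each chip the horizontal paths contribute the identity matrix, and the single slanted (resp.\ reweighted horizontal) edge contributes exactly the entry $t$ in position $(i,i+1)$, $(i+1,i)$, or replaces the $(i,i)$ entry by $t$, giving $x_{i}(t)$, $x_{\overline{i}}(t)$, $x_{\textcircled{i}}(t)$. For the multiplicativity, you correctly isolate the one point that needs an argument: the $n$ glued vertices form a cut that every source-to-sink path of $\Gamma'\Gamma''$ meets in exactly one vertex (every edge out of a cut vertex lies in $\Gamma''$ and every edge into one lies in $\Gamma'$, and no edge joins two cut vertices, so once a path reaches the cut it cannot return), whence $P\mapsto(P',P'')$ is a genuine bijection and the sum over paths factors as $\sum_{k}x(\Gamma')_{i,k}\,x(\Gamma'')_{k,j}$ with no over- or undercounting. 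Proving multiplicativity for arbitrary concatenations of planar networks, rather than only for chips, is a mild strengthening, and it is in fact the form in which the present paper uses the lemma (e.g.\ to assemble the network of Corollary \ref{corrplanarnetwork} and Example \ref{ex2.6} from many chips at once).
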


\begin{figure}[h] \centering
  \includegraphics*[164,363][494,467]{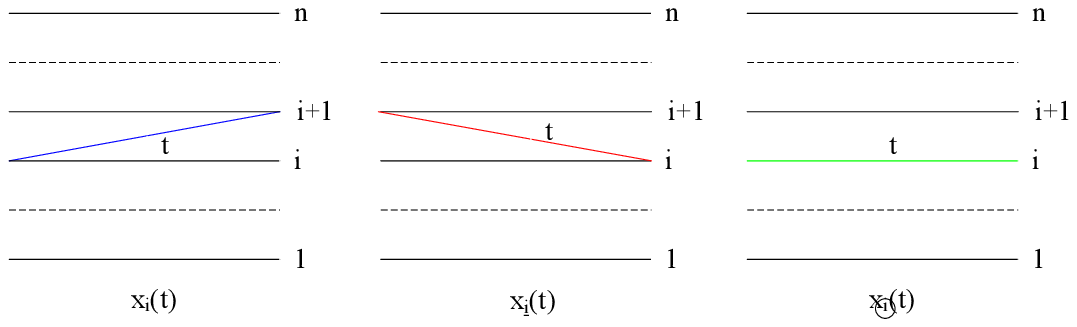}

 Figure 2
\end{figure}

Therefore, any invertible matrix generated by some elementary Jacobi matrices can be realized as the weight matrix for a planar network, since we have the characterization for $GL_{n}(\mathds{C})$.
\begin{Prop}\label{prop2.3}
{\rm For any integer $n\geq 0$, the general linear group $GL_{n}(\mathds{C})$ can be generated by all elementary Jacobi matrices, that is,
$$GL_{n}(\mathds{C})=\langle x_{i}(t),x_{\overline{i}}(t),x_{\textcircled{i}}(t'),t\in \mathds{C},t'\in \mathds{C}\backslash \{0\}\rangle.$$}
\end{Prop}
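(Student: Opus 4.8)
The plan is to reduce the statement to a reduction-to-diagonal argument and then bridge the gap created by the tridiagonal (``Jacobi'') restriction on the generators. Throughout, write $x_{ij}(t)=I+tE_{ij}$ for a general transvection ($i\neq j$), and observe that among our generators only the \emph{adjacent} transvections $x_{i}(t)=x_{i,i+1}(t)$ and $x_{\overline{i}}(t)=x_{i+1,i}(t)$ appear, together with the diagonal dilations $x_{\textcircled{i}}(t')$. First I would record the classical fact that $GL_{n}(\mathds{C})$ is generated by all transvections $x_{ij}(t)$ ($i\neq j$) together with the $x_{\textcircled{i}}(t')$: performing Gaussian elimination by operations of the form ``add a multiple of one row (resp.\ column) to another'', which are precisely left (resp.\ right) multiplications by transvections, one can bring any invertible $M$ to a diagonal matrix $D$ (since $M$ is invertible, a nonzero pivot can always be produced by first adding a suitable row, so no row swaps are needed); the resulting $D$ is then a product of the $x_{\textcircled{i}}(t')$. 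Hence it suffices to show that every transvection $x_{ij}(t)$ lies in the subgroup $G$ generated by the elementary Jacobi matrices.

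The key tool is the Steinberg commutator identity: for pairwise distinct indices $i,k,j$,
\[
x_{ik}(a)\,x_{kj}(b)\,x_{ik}(a)^{-1}\,x_{kj}(b)^{-1}=x_{ij}(ab),
\]
which follows by a direct expansion using $E_{ik}E_{kj}=E_{ij}$ and $E_{pq}E_{rs}=0$ whenever $q\neq r$. Taking $b=1$ and $k=i+1$ gives $x_{i,i+2}(t)=x_{i,i+1}(t)\,x_{i+1,i+2}(1)\,x_{i,i+1}(t)^{-1}\,x_{i+1,i+2}(1)^{-1}$, thereby expressing a ``distance-two'' upper transvection through adjacent ones. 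Proceeding by induction on $|i-j|$, choosing at each step an intermediate index $k$ strictly between $i$ and $j$, I obtain every upper transvection $x_{ij}(t)$ with $i<j$ as a product of the $x_{i}(s)$, and symmetrically every lower transvection $x_{ij}(t)$ with $i>j$ as a product of the $x_{\overline{i}}(s)$. Thus all transvections lie in $G$.

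Combining the two steps, $G$ contains every transvection and, by definition, every diagonal dilation $x_{\textcircled{i}}(t')$; by the first paragraph these generate $GL_{n}(\mathds{C})$, so $GL_{n}(\mathds{C})\subseteq G$. The reverse inclusion $G\subseteq GL_{n}(\mathds{C})$ is immediate since each generator is invertible, which yields the asserted equality.

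I expect the main obstacle to be the inductive generation of the non-adjacent transvections: one must verify the commutator identity carefully (the tridiagonal restriction is exactly what forces the use of an auxiliary middle index), and confirm that for $|i-j|\geq 2$ an intermediate index $k$ with $i<k<j$ (resp.\ $j<k<i$) is always available, so that the induction on $|i-j|$ genuinely decreases the distance at each stage. The reduction-to-diagonal step is routine linear algebra, but I would phrase it with care so as not to tacitly introduce a non-Jacobi elementary matrix without first justifying it through the commutator identity.
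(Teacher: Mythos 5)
Your proof is correct, and its engine is the same as the paper's: your Steinberg commutator identity $x_{ik}(a)\,x_{kj}(b)\,x_{ik}(a)^{-1}\,x_{kj}(b)^{-1}=x_{ij}(ab)$ is precisely the identity the paper writes out in its Case 2, namely $I+tE_{i,i+k+1}=(I+tE_{i,i+k})(I+E_{i+k,i+k+1})(I-tE_{i,i+k})(I-E_{i+k,i+k+1})$, and both arguments run the same induction shrinking the distance $|i-j|$ (the paper always takes the intermediate index adjacent to $j$, while you allow any $k$ strictly between $i$ and $j$; both inductions are valid, and your worry about the availability of such $k$ is settled exactly by $|i-j|\geq 2$). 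Where you genuinely diverge is the reduction step. The paper invokes the decomposition of an invertible matrix into elementary matrices of the three classical types and must therefore treat row switching as a separate Case 3, via the explicit factorization $I+E_{i,j}+E_{j,i}-E_{i,i}-E_{j,j}=(I-E_{i,j})(I+E_{j,i})(I-E_{i,j})(I-2E_{j,j})$, expressing a transposition through transvections and a dilation. You instead perform swap-free Gaussian elimination, using invertibility to manufacture a nonzero pivot by a preliminary row addition, so $M$ factors into transvections and a diagonal matrix $\prod_{i} x_{\textcircled{i}}(d_i)$ and permutation matrices never enter the picture. Your route buys a two-case argument at the cost of the (routine, but worth spelling out) check that swap-free elimination succeeds for every invertible matrix; the paper's route leans on the textbook elementary-matrix decomposition and pays with the extra Case 3 identity. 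Both proofs are complete and correct.
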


\begin{proof}
  Clearly,  $GL_{n}(\mathds{C})\supseteq \langle x_{i}(t),x_{\overline{i}}(t),x_{\textcircled{i}}(t'),t\in \mathds{C},t'\in \mathds{C}\backslash \{0\}\rangle$.

  Since any matrix in $GL_{n}$ can be written as a product of elementary matrices,  it suffices to show that any one of elementary matrices of three types  can be generated by some elementary Jacobi matrices.

{\bf The case $1$:~ The type of row multiplication.}

Obviously, all elementary matrices of this type have the form $x_{\textcircled{i}}(t)$ for $t\in \mathds{C}\backslash \{0\}$.

{\bf The case $2$:~ The type of row addition.}

For any $1\leq i,j\leq n$, $i\neq j$, we only consider the case $i<j$, no loss of generality.
We use the induction method on the number $j-i$.

If $j-i=1$, then the result is trivial due to $x_{i}(t)=I+tE_{i,i+1}$.

In general, assume that the conclusion is true for $j-i\leq k$, then considering the case for $j-i=k+1$, we have:
\begin{eqnarray*}
I+tE_{i,i+k+1}  & = &  I+E_{i+k,i+k+1}+tE_{i,i+k+1}-E_{i+k,i+k+1} \\
                & = & (I+tE_{i,i+k})(I+E_{i+k,i+k+1})(I-tE_{i,i+k})(I-E_{i+k,i+k+1})
\end{eqnarray*}
 due to $E_{i,j}E_{k,l}=\delta_{j,k}E_{i,l}$, where $\delta_{j,k}$ is the Kronecker symbol. Using the induction method, it means that any such elementary matrix  can be generated by some elementary Jacobi matrices.

{\bf The case $3$:~ The type of row switching.}

The elementary matrix for exchanging the rows $i$ and $j$ is just equal to the matrix $I+E_{i,j}+E_{j,i}-E_{i,i}-E_{j,j}$, moreover, which can be represented as  \begin{eqnarray*}
I+E_{i,j}+E_{j,i}-E_{i,i}-E_{j,j} & = &  I+E_{j,i}-E_{i,j}-E_{i,i}-E_{j,j}-2E_{j,j}+2E_{i,j}+2E_{j,j}  \\
& = & (I-E_{i,j})(I+E_{j,i})(I-E_{i,j})(I-2E_{j,j}).
\end{eqnarray*}
Then the result follows from that of the case 2.
\end{proof}

Since all elementary Jacobi matrices are invertible, an non-invertible matrix cannot be represented as their product. So, in order to consider not necessarily invertible matrices, we need to generalize the notion of elementary Jacobi matrices.
\begin{Def}\label{def2.4}
{\rm Define a \emph{generalized elementary Jacobi matrix} in $M_n(\mathds C)$ to be a matrix that differs from the identity matrix $I$ in a single entry located either on the main diagonal or immediately above or below it, more precisely, that is one of the form $x_{i}(t),x_{\overline{i}}(t),x_{\textcircled{i}}(t)$ for $t\in \mathds{C}$.}
\end{Def}
\begin{Prop}\label{thm2.5}
{\rm For any positive integer $n$,  any $n\times n$ matrix $M$ over $\mathds C$ can be generated by some generalized elementary Jacobi matrices, that is,   $M_{n}(\mathds{C})=\langle x_{i}(t),x_{\overline{i}}(t),x_{\textcircled{i}}(t),~ \forall t\in \mathds{C}\rangle$.}
\end{Prop}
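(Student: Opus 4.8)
The plan is to reduce the statement to Proposition \ref{prop2.3} together with a single new observation supplied by the extra generators. First I would note that every \emph{invertible} elementary Jacobi matrix is in particular a generalized elementary Jacobi matrix: the matrices $x_i(t)$ and $x_{\overline i}(t)$ are literally the same in both definitions, while the invertible diagonal matrices $x_{\textcircled{i}}(t')$ with $t'\in\mathds C\setminus\{0\}$ are exactly the $t'\neq 0$ members of the larger family $x_{\textcircled{i}}(t)$, $t\in\mathds C$. Hence, by Proposition \ref{prop2.3}, every matrix of $GL_n(\mathds C)$ is already a product of generalized elementary Jacobi matrices, and the task reduces to producing the singular matrices.

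The key new ingredient is the previously excluded value $t=0$ in the diagonal family. Since $x_{\textcircled{i}}(0)=I-E_{i,i}$ and $E_{i,i}E_{j,j}=0$ for $i\neq j$, these generators commute and multiply to diagonal idempotents; concretely, for any $0\le r\le n$,
$$\prod_{i=r+1}^{n} x_{\textcircled{i}}(0) = I-\sum_{i=r+1}^{n}E_{i,i} = \mathrm{diag}(\underbrace{1,\dots,1}_{r},\underbrace{0,\dots,0}_{n-r}) =: P_r ,$$
which is precisely the rank-$r$ normal form matrix.

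I would then invoke the rank factorization theorem of linear algebra: for any $M\in M_n(\mathds C)$ of rank $r$ there exist $A,B\in GL_n(\mathds C)$ with $M=A\,P_r\,B$, obtained by bringing $M$ to its rank normal form through invertible row and column operations and inverting them. Combining the three pieces, $A$ and $B$ are products of generalized elementary Jacobi matrices by the first step, and $P_r$ is such a product by the second step, so $M$ itself is a product of generalized elementary Jacobi matrices. This gives $M_n(\mathds C)\subseteq\langle x_i(t),x_{\overline i}(t),x_{\textcircled{i}}(t),\forall t\in\mathds C\rangle$, and the reverse inclusion is immediate since each generator lies in $M_n(\mathds C)$.

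The only genuine obstacle relative to Proposition \ref{prop2.3} is that $M_n(\mathds C)$ is a monoid rather than a group, so a pure group-generation argument can never reach the non-invertible matrices. This gap is closed exactly by admitting $t=0$ in $x_{\textcircled{i}}(t)$, which furnishes the diagonal idempotents $P_r$; once these are available, the rank factorization funnels every matrix back into the already-understood invertible case, so I expect no further difficulty beyond carefully recording the rank normal form.
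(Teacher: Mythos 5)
Your proposal is correct and takes essentially the same route as the paper: both arguments use the rank factorization $M=M_{1}P_{r}M_{2}$ with $M_{1},M_{2}\in GL_{n}(\mathds{C})$ handled by Proposition \ref{prop2.3}, and both realize the rank normal form as $P_{r}=\prod\limits_{s=r+1}^{n}x_{\textcircled{s}}(0)$. The only difference is cosmetic: you spell out the commuting-idempotent computation $x_{\textcircled{i}}(0)=I-E_{i,i}$ explicitly, which the paper leaves implicit.
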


\begin{proof}
We know that $M=M_{1}\left(
                       \begin{array}{cc}
                         I_r &  \\
                          & O_{n-r} \\
                       \end{array}
                     \right)
M_{2}$, where both $M_{1},M_{2}$ are invertible, $r$ is the rank of $M$. The result follows from Proposition \ref{prop2.3} and $\left(
                       \begin{array}{cc}
                         I_r &  \\
                          & O_{n-r} \\
                       \end{array}
                     \right)=\prod\limits_{s=r+1}^n x_{\textcircled{s}}(0)$.
\end{proof}

By Lemma \ref{lem2.2} and Proposition \ref{thm2.5}, we get that
\begin{Cor}\label{corrplanarnetwork}
{\rm For any matrix $M$, there exists a planar network $(\Gamma,\omega)$ such that its weight matrix $x(\Gamma,\omega)=M$.}
\end{Cor}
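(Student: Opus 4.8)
The plan is to chain together the two results just established. By Proposition \ref{thm2.5}, any $n\times n$ matrix $M$ over $\mathbb{C}$ can be written as a finite product $M = J_1 J_2 \cdots J_k$ of generalized elementary Jacobi matrices, each of the form $x_i(t)$, $x_{\overline{i}}(t)$, or $x_{\textcircled{i}}(t)$ for some $t\in\mathbb{C}$. The goal is then to assemble from this factorization a single planar network whose weight matrix equals $M$.

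First I would invoke Lemma \ref{lem2.2} to attach to each factor $J_l$ its corresponding ``chip'' $(\Gamma_l,\omega_l)$, that is, the elementary planar network on $n$ sources and $n$ sinks whose weight matrix is exactly $J_l$. In each such chip every edge carries weight $1$ except the single distinguished slanted or horizontal edge, which carries the scalar $t$ attached to $J_l$. I note here that Definition \ref{def2.4} permits $t=0$, which is precisely what occurs for the factors $x_{\textcircled{s}}(0)$ produced in the proof of Proposition \ref{thm2.5}; this causes no difficulty, since the distinguished edge may simply be assigned weight $0$ and the weight matrix remains well-defined as a sum of path weights.

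Next I would concatenate the chips $\Gamma_1,\Gamma_2,\ldots,\Gamma_k$ from left to right, identifying the $n$ sinks of $\Gamma_l$ with the $n$ sources of $\Gamma_{l+1}$, thereby obtaining a single directed planar graph $\Gamma$ with $n$ sources on the left and $n$ sinks on the right. Because every edge of every chip is directed left to right, the resulting graph $\Gamma$ is acyclic and is therefore a genuine planar network in the sense of the definition. By the concatenation statement of Lemma \ref{lem2.2}, the weight matrix of a concatenation equals the product of the weight matrices of its pieces, so $x(\Gamma,\omega) = x(\Gamma_1,\omega_1)\, x(\Gamma_2,\omega_2)\cdots x(\Gamma_k,\omega_k) = J_1 J_2 \cdots J_k = M$, which is exactly what is claimed.

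Since both cited results do the substantive work, the argument is essentially formal and I do not expect a deep obstacle. The one point meriting care is the bookkeeping of the order of concatenation: a left-to-right concatenation must correspond to left-to-right matrix multiplication. This one checks directly from the path description of the weight matrix, since a path from source $i$ to sink $j$ in the concatenation of two networks factors uniquely through the shared middle vertices, giving $(i,j)$-entry $\sum_k x_{\Gamma_l}(i,k)\, x_{\Gamma_{l+1}}(k,j)$, i.e. the $(i,j)$-entry of the product. Verifying that this order is consistent with the factorization supplied by Proposition \ref{thm2.5} is the only step I would spell out explicitly.
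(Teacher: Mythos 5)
Your proposal is correct and follows essentially the same route as the paper, which derives the corollary directly from Lemma \ref{lem2.2} and Proposition \ref{thm2.5} with no further argument written out. Your explicit treatment of the degenerate factors $x_{\textcircled{s}}(0)$ (assigning the distinguished edge weight $0$) and your check that left-to-right concatenation matches left-to-right matrix multiplication merely spell out details the paper leaves implicit.
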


\subsection{The criterion for positive definite matrices via planar networks.\\}

We have already seen that by Corollary \ref{corrplanarnetwork}, for any matrix $M$, there exists a planar network $(\Gamma,\omega)$ such that its weight matrix $x(\Gamma,\omega)=M$, although this correspondence is neither injective nor surjective in general. Thus, by Lemma \ref{lem1.1} we can read the leading principal minors $\Delta_{\{1\},\{1\}}, \cdots, \Delta_{[1,n],[1,n]}$ directly from the planar network $(\Gamma,\omega)$. From the basic fact that a Hermitian matrix is positive definite if and only if all of its leading principal minors are positive, we can obtain the algorithm for checking whether a given matrix $M$ is positive definite or not as follows, which give us a view-point to observe positive definite matrices.

\begin{Algo}
{\rm (1)~ Decompose $M=M_{1}M_{2}\cdots M_{t}$ for some generalized elementary Jacobi matrices $M_1, M_2, \cdots, M_t$;

(2)~ By the correspondences between ``chips'' and generalized elementary Jacobi matrices, draw the planar network $(\Gamma,\omega)$ with $x(\Gamma,\omega)=M$;

(3)~  Calculate the leading principal minors directly from $(\Gamma,\omega)$ by the method given in Lemma \ref{lem1.1};

(4)~ Check whether a given matrix $M$ is positive definite or not.}
\end{Algo}

Additionally, by \cite{[9]}, we obtain that for any $n\times n$ matrix $M$, $M$ can be decomposed as $M=U_{1}LU_{2}$ or $M=L_{1}UL_{2}$ with $L,L_{1},L_{2}$ lower triangular matrices and $U,U_{1},U_{2}$ upper triangular matrices. Thus the planar network have the forms ``// $\backslash\backslash$ //'' or ``$\backslash\backslash$ // $\backslash\backslash$'', respectively. Furthermore, we have

\begin{Prop}\label{prop2.6}
{\rm For any $n\times n$ invertible matrix $M$, the following statements are equivalent:

(i) $M$ has a $LDU$-decomposition, i.e., $M=LDU$ with $L$ a unit lower triangular matrix, $U$ a unit upper triangular matrix and $D$ a diagonal matrix.

(ii) The corresponding planar network $(\Gamma,\omega)$ of $M$ is in the form ``$\backslash\backslash -- //$''.

(iii) All the leading principal minors of $M$ are nonsingular.
}
\end{Prop}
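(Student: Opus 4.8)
The plan is to prove the cyclic chain of implications (i) $\Rightarrow$ (ii) $\Rightarrow$ (iii) $\Rightarrow$ (i), reading statement (ii) in the existential sense forced by the non-uniqueness of the correspondence $M\leftrightarrow(\Gamma,\omega)$ noted just before the proposition: namely, that $M$ admits at least one planar-network realization of shape ``$\backslash\backslash -- //$''. The bridge throughout is Lemma \ref{lem2.2}, which matches the three types of elementary Jacobi matrices with the three types of chips ($x_{\overline i}(t)$ with a down-slant ``$\backslash$'', $x_{\textcircled i}(t)$ with a weighted horizontal ``$-$'', and $x_i(t)$ with an up-slant ``$/$'') and asserts that concatenating chips multiplies their weight matrices.

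For (i) $\Rightarrow$ (ii): starting from $M=LDU$, I would factor each unit-triangular factor into adjacent elementary Jacobi matrices. A unit lower triangular $L$ is a product of matrices $I+tE_{j,i}$ with $j>i$, and by the row-addition identity used in Case 2 of Proposition \ref{prop2.3} (applied to sub-diagonal entries) each such matrix is a product of the adjacent generators $x_{\overline k}(t)$; every factor involved is lower triangular, so all of its chips carry only down-slants. Hence $L$ is realized by a block of chips using exclusively ``$\backslash$'' edges, $D=\prod_i x_{\textcircled i}(d_i)$ by a block of weighted horizontals, and $U$ symmetrically by a block of ``$/$'' edges. Concatenating these three blocks in this order gives, by Lemma \ref{lem2.2}, a planar network of shape ``$\backslash\backslash -- //$'' whose weight matrix is $LDU=M$.

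For (ii) $\Rightarrow$ (iii): given such a realization, Remark \ref{rem1.2} applies and computes each leading principal minor as $\Delta_{[1,s],[1,s]}(M)=\prod_{k=1}^{s}\prod_{i=1}^{t_k}\omega_{ki}$, a product of the horizontal weightings in the middle block. Since $M$ is invertible, the full determinant $\Delta_{[1,n],[1,n]}(M)=\prod_{k,i}\omega_{ki}$ is nonzero, forcing every $\omega_{ki}\neq 0$; therefore every partial product, i.e. every leading principal minor, is nonzero. For (iii) $\Rightarrow$ (i): this is the classical existence of the Gauss $LDU$ factorization, which I would obtain by induction on $n$ (equivalently, Gaussian elimination without row exchanges), using at step $k$ that the nonvanishing of $\Delta_{[1,k],[1,k]}$ guarantees a nonzero pivot, so elimination never stalls and yields $M=L\widetilde U$ with $L$ unit lower triangular and $\widetilde U$ upper triangular with nonzero diagonal; splitting $\widetilde U=DU$ with $D$ the diagonal of pivots and $U$ unit upper triangular closes the loop.

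I expect the main obstacle to be the careful translation in (i) $\Rightarrow$ (ii): one must verify that the chip-factorization of a \emph{unit} triangular matrix can be arranged so that all of its slanted edges lie on the correct side and pack into a single ``$\backslash\backslash$'' (resp.\ ``$//$'') region without introducing opposite slants — precisely where Case 2 of Proposition \ref{prop2.3} and the concatenation clause of Lemma \ref{lem2.2} must be invoked in tandem — together with fixing the existential reading of (ii) so that the statement is well posed despite the correspondence $M\leftrightarrow(\Gamma,\omega)$ being neither injective nor surjective. By contrast, (iii) $\Rightarrow$ (i) is standard, and (ii) $\Rightarrow$ (iii) is immediate from Remark \ref{rem1.2}.
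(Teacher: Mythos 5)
Your proof is correct and follows essentially the same route as the paper: (i) $\Rightarrow$ (ii) via the elementary Jacobi factorizations of $L$, $D$, $U$ from the proof of Proposition \ref{prop2.3} combined with the concatenation clause of Lemma \ref{lem2.2}, and (ii) $\Rightarrow$ (iii) via Remark \ref{rem1.2} together with the observation that invertibility forces every horizontal weighting to be nonzero. The only divergence is that the paper dispatches the equivalence of (i) and (iii) by citing \cite{[8]}, whereas you prove (iii) $\Rightarrow$ (i) inline by Gaussian elimination without row exchanges --- a harmless substitution, and your explicit verification that the chip factorization of a unit triangular matrix can be arranged to use only one slant type is in fact slightly more careful than the paper's bare appeal to Proposition \ref{prop2.3}.
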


\begin{proof}
Firstly, the equivalence of (i) and (iii) can be found in \cite{[8]}.

(i)$\Longrightarrow$(ii): $M=LDU$ a $LDU$-decomposition. According to the proof of Proposition \ref{prop2.3}, we can give the decompositions  $L=L_{1}L_{2}\cdots L_{r}$, $U=U_{1}U_{2}\cdots U_{s}$, $D=D_{1}D_{2}\cdots D_{t}$, where $L_{1},\cdots,L_{r}$ are in the form $x_{\overline{i}}(\gamma)$, $U_{1},\cdots,U_{s}$ are in the form $x_{i}(\gamma)$ and $D_{1},\cdots,D_{t}$ are in the form $x_{\textcircled{i}}(\gamma)$ respectively. Thus the corresponding planar network  $(\Gamma,\omega)$ of $M$ is in the form ``$\backslash\backslash -- //$''.

(ii)$\Longrightarrow$(iii): Since the corresponding planar network $(\Gamma,\omega)$ of $M$ is in the form ``$\backslash\backslash -- //$'' and $M$ is invertible, by Remark \ref{rem1.2} we have $$\Delta_{12\cdots n,12\cdots n}=\prod_{i=1}^{t_{1}}\omega_{1i}\prod_{i=1}^{t_{2}}\omega_{2i}\cdots \prod_{i=1}^{t_{n}}\omega_{ni}={\rm det}(M)\neq 0$$
where the horizontal weightings $\omega_{k1},\omega_{k2},\cdots,\omega_{kt_{k}}$ appear in Line $k$ for $k=1,2,\cdots,n$ respectively. Hence all the leading principal minors
$$\Delta_{1,1}=\prod_{i=1}^{t_{1}}\omega_{1i}, \Delta_{12,12}=\prod_{i=1}^{t_{1}}\omega_{1i}\prod_{i=1}^{t_{2}}\omega_{2i},\cdots, \Delta_{12\cdots n,12\cdots n}=\prod_{i=1}^{t_{1}}\omega_{1i}\prod_{i=1}^{t_{2}}\omega_{2i}\cdots \prod_{i=1}^{t_{n}}\omega_{ni}$$
are nonsingular.
\end{proof}

It is easy to see that for an $n\times n$ matrix $M=LDU$ a $LDU$-decomposition with $D={\rm diag}(d_{1},d_{2},\cdots,d_{n})$, the leading principal minors of $M$ are $\Delta_{12\cdots k,12\cdots k}=d_{1}d_{2}\cdots d_{k}$ for $k=1,2,\cdots,n$. In this case, if furthermore $M$ is invertible, then by Proposition \ref{prop2.6} and Remark \ref{rem1.2}, we have $d_{k}=\prod\limits_{i=1}^{t_{k}}\omega_{ki}$ for all $1\leq k\leq n$. It follows that

\begin{Cor}\label{maincor}
{\rm An invertible Hermitian matrix $M$ is positive definite if and only if the weightings $d_k=\prod\limits_{i=1}^{t_{k}}\omega_{ki}$ of Line $k$ in the corresponding planar network $(\Gamma,\omega)$ of $M$ are positive for all $1\leq k\leq n$. }
\end{Cor}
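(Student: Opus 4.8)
The plan is to reduce Corollary \ref{maincor} almost entirely to the two results that immediately precede it, namely Proposition \ref{prop2.6} together with the computation of the leading principal minors in terms of the diagonal factor $D$. First I would observe that the statement only concerns invertible Hermitian $M$, and invoke the standard fact already recalled in Section 2.1: a Hermitian matrix is positive definite if and only if all of its leading principal minors $\Delta_{12\cdots k,12\cdots k}$ are positive real numbers. This converts the whole claim into a statement about the signs of the leading principal minors.

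Next I would connect these minors to the horizontal weightings. Since $M$ is invertible and Hermitian, its leading principal minors are in particular all nonzero, so by the equivalence (i)$\Longleftrightarrow$(iii) of Proposition \ref{prop2.6} it admits an $LDU$-decomposition $M=LDU$ with $D={\rm diag}(d_1,\dots,d_n)$, and by the equivalence with (ii) its planar network is in the form ``$\backslash\backslash -- //$''. The remark preceding the corollary records that in this situation $\Delta_{12\cdots k,12\cdots k}=d_1 d_2\cdots d_k$, while Remark \ref{rem1.2} identifies $d_k=\prod_{i=1}^{t_k}\omega_{ki}$, the product of the horizontal weightings on Line $k$. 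Thus the leading principal minors are exactly the partial products $\Delta_{12\cdots k,12\cdots k}=\prod_{j=1}^{k}d_j=\prod_{j=1}^{k}\prod_{i=1}^{t_j}\omega_{ji}$.

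The core of the argument is then the elementary equivalence: the partial products $\prod_{j=1}^{k}d_j$ are all positive for $k=1,\dots,n$ if and only if each individual factor $d_k$ is positive. The ``if'' direction is immediate. For the ``only if'' direction I would argue inductively: $d_1=\Delta_{1,1}>0$, and for $k\geq 2$ one recovers $d_k$ as the ratio $\Delta_{12\cdots k,12\cdots k}/\Delta_{12\cdots (k-1),12\cdots (k-1)}$ of two consecutive positive leading principal minors, which is therefore positive. (Here invertibility guarantees the denominators are nonzero, so the ratio is well defined.) Combining this with the minor-positivity criterion for positive definiteness yields that $M$ is positive definite precisely when every $d_k=\prod_{i=1}^{t_k}\omega_{ki}$ is positive, which is the assertion.

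I do not expect a serious obstacle here, since all the substantive work — the $LDU$-characterization and the planar-network interpretation of the diagonal entries — is already done in Proposition \ref{prop2.6} and Remark \ref{rem1.2}. The one point that deserves a line of care is making sure the $d_k$ are \emph{real}: a priori an $LDU$-decomposition produces complex $d_k$, but Hermiticity of $M$ forces the leading principal minors (and hence their ratios $d_k$) to be real, so ``positive'' is meaningful. Once that is noted, the proof is a direct chain of the quoted equivalences plus the trivial partial-product observation.
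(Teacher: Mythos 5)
Your route is essentially the paper's: the paper proves the corollary in the paragraph immediately preceding it, by observing that for $M=LDU$ with $D={\rm diag}(d_1,\dots,d_n)$ the leading principal minors are $\Delta_{12\cdots k,12\cdots k}=d_1d_2\cdots d_k$, that Proposition \ref{prop2.6} together with Remark \ref{rem1.2} identifies $d_k=\prod_{i=1}^{t_k}\omega_{ki}$ when $M$ is invertible, and then invoking the criterion that a Hermitian matrix is positive definite if and only if all leading principal minors are positive. Your partial-product induction (recovering $d_k$ as the ratio $\Delta_{12\cdots k,12\cdots k}/\Delta_{12\cdots(k-1),12\cdots(k-1)}$) and your remark that Hermiticity forces the $d_k$ to be real are exactly the tacit content of the paper's ``it follows that''.

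There is, however, one step in your write-up that is false as stated: ``Since $M$ is invertible and Hermitian, its leading principal minors are in particular all nonzero.'' This fails, for instance, for $M=\left(\begin{smallmatrix}0&1\\1&0\end{smallmatrix}\right)$, which is invertible and Hermitian yet has $\Delta_{1,1}=0$; such a matrix admits no $LDU$-decomposition, its planar network cannot be put in the form ``$\backslash\backslash -- //$'', and so the quantities $d_k=\prod_{i=1}^{t_k}\omega_{ki}$ you want to compare are not even defined. You use this false claim precisely to license the appeal to Proposition \ref{prop2.6}(i)$\Longleftrightarrow$(iii), so it is a genuine (if localized) gap rather than a cosmetic one. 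The paper avoids it by tacitly restricting to matrices with an $LDU$-decomposition (see the sentence before the corollary), and the repair in your framework is direction-dependent: in the ``only if'' direction, positive definiteness itself yields positive, hence nonzero, leading principal minors, and then Proposition \ref{prop2.6} supplies the $LDU$-decomposition and the ``$\backslash\backslash -- //$'' network; in the ``if'' direction, the hypothesis already presupposes a network of that form, and Proposition \ref{prop2.6}(ii)$\Longrightarrow$(iii) gives the nonvanishing you need. With that substitution your argument is correct and coincides with the paper's.
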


We end this section with the following example to illustrate the algorithm for checking positive definite matrices.

\begin{Ex}\label{ex2.6}
{\rm Let $M=\begin{matrix}
  \begin{pmatrix}
   1  &  2  &  4  \\
   2  &  6  &  8  \\
   4  &  8  &  18
  \end{pmatrix}
  \end{matrix}$, then firstly $M$ can be decomposed as

\begin{eqnarray*}
  M & = & \begin{matrix}
  \begin{pmatrix}
   1  &  0  &  0  \\
   2  &  1  &  0  \\
   0  &  0  &  1
  \end{pmatrix}
  \end{matrix}
  \begin{matrix}
  \begin{pmatrix}
   1  &  0  &  0  \\
   0  &  1  &  0  \\
   4  &  0  &  1
  \end{pmatrix}
  \end{matrix}
  \begin{matrix}
  \begin{pmatrix}
   1  &  0  &  0  \\
   0  &  2  &  0  \\
   0  &  0  &  2
  \end{pmatrix}
  \end{matrix}
  \begin{matrix}
  \begin{pmatrix}
   1  &  0  &  4  \\
   0  &  1  &  0  \\
   0  &  0  &  1
  \end{pmatrix}
  \end{matrix}
  \begin{matrix}
  \begin{pmatrix}
   1  &  2  &  0  \\
   0  &  1  &  0  \\
   0  &  0  &  1
  \end{pmatrix}
  \end{matrix} \\
   & = & x_{\overline{1}}(2)x_{\overline{2}}(-1)x_{\overline{1}}(-4)x_{\overline{2}}(1)
  x_{\overline{1}}(4)x_{\textcircled{2}}(2)x_{\textcircled{3}}(2)x_{1}(4)x_{2}(1)x_{1}(-4)x_{2}(-1)x_{1}(2)
  \end{eqnarray*}

Next, the corresponding planar network $(\Gamma,\omega)$ is shown as Figure 3, where the weightings $\omega(e)=1$ for all unlabelled edges $e$.

\begin{figure}[h] \centering
  \includegraphics*[137,413][417,496]{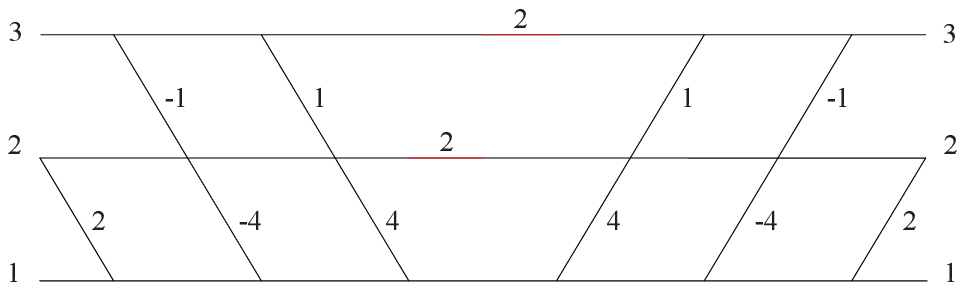}

 Figure 3
\end{figure}

Finally, we have $d_{1}=1, d_{2}=2, d_{3}=2$. Now by Corollary \ref{maincor},  we know that $M$ is positive definite.}
\end{Ex}

\bigskip
\section{Characterization of positive definite matrices via cluster subalgebras.\\}

In \cite{[3]}, S.Fomin show total positivity as one of the motivations of cluster algebras. In brief, take a complex variety $X$ together with a family ${\bf \Delta}$ of ``important'' regular functions $\Delta$ on $X$. The corresponding totally positive variety $X_{>0}$ is the set of points at which all of these functions take positive values, that is, $X_{>0}=\{x\in X: \Delta(x)>0$ for all $\Delta \in {\bf \Delta}\}$. If $X$ is the affine space of matrices of a given size, e.g., if $X=M_{n}(\mathds{C})$, and ${\bf \Delta}$ is the set of all minors, then we recover the classical notation of totally positive matrices.

The question arises naturally that for any element $x\in X$, in order to test whether $x\in X_{>0}$ or not, should we check all the inequalities $\Delta(x)>0$ for all $\Delta \in {\bf \Delta}?$

In the above example, there are $\binom{2n}{n}-1$ minors in total. However, S.Fomin and A.Zelevinsky in \cite{[4]} told us that it is enough to check certain $n^{2}$ minors in order to make sure a matrix to be totally positive through the chamber minors of the corresponding double wiring diagram. This is because all the minors of $M_{n}(\mathds{C})$ can be represented by the rational functions of these $n^{2}$ minors with positive coefficients.

\subsection{Double wiring diagram and its associated quiver.\\}

In order to explain this question, recall that the \emph{double wiring diagram} defined in \cite{[4]} is a diagram consists of two families of $n$ piecewise-straight lines $($each family colored with one of the two colors$)$, the crucial requirement is that each pair of lines with the same color intersect exactly once. Moreover, the lines in a double wiring diagram are numbered separately within each color.

Now we give the definition of \emph{chamber} and \emph{chamber minor} of a double wiring diagram as follows.

\begin{Def}\label{def3.0}
{\rm Let $\Omega$ be a double wiring diagram and $\square$ be a region of $\Omega$. We then assign to $\square$ a pair of subsets $I,J$ of the set $[1,n]=\{1,\cdots,n\}$ such that each subset indicates the line numbers of the corresponding color passing \emph{below} the $\square$. If $|I|=|J|$, then the region $\square$ is called as the \emph{chamber} and in this case, the minor $\Delta_{I,J}$ of an $n\times n$ matrix $x=(x_{ij})$ is called the \emph{chamber minor} of $x$.}
\end{Def}

For example, we show all the chambers of the double wiring diagram in Figure $4$ and the corresponding nine chamber minors $($the total number is always $n^{2}$$)$ are $x_{31}$, $x_{21}$, $x_{11}$, $x_{13}$, $\Delta_{23,12}$, $\Delta_{12,12}$, $\Delta_{12,13}$, $\Delta_{12,23}$ and $\Delta_{123,123}={\rm det}(x)$.

\begin{figure}[h] \centering
  \includegraphics*[73,381][467,529]{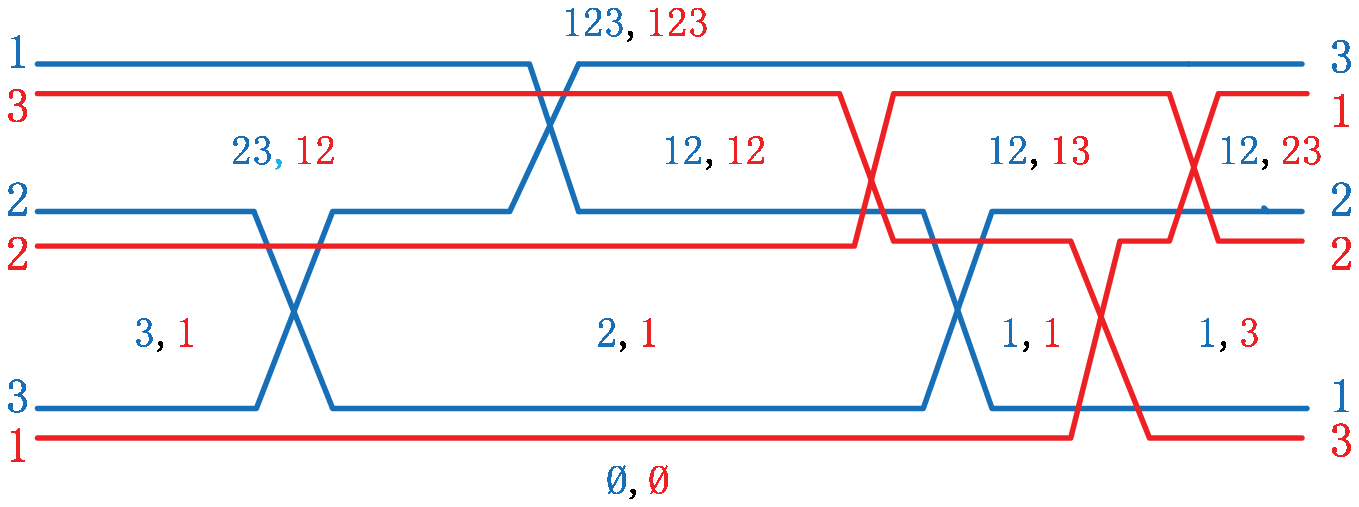}

 Figure 4
\end{figure}

In \cite{[2]}, or more generally in Section $2$ of \cite{[1]}, we can associate a quiver $Q=Q(\Omega)$ to a double wiring diagram $\Omega$ and this construction provided is purely combinatorial.

(i)~ The vertices of $Q$ are the chambers of $\Omega$.

(ii)~ There is an edge between two chambers $c$ and $c'$ of $Q$ in the following cases:

(1)~ They are adjacent chambers in the same row. If the color of the crossing between them is blue, the edge is directed to the left. Otherwise, it is directed to the right.

(2)~ If $c'$ has left and right boundaries of different color and lies completely above $($or below$)$ $c$. If the left boundary of $c'$ is blue, the edge is directed from $c$ to $c'$. Otherwise, it is directed from $c'$ to $c$.

(3)~ If the left boundary of $c'$ is above $c$ and the right boundary of $c$ is below $c'$ and both boundaries have the same color. If such common color is blue, the edge is directed from $c$ to $c'$; otherwise, it is directed from $c'$ to $c$.

(4)~ If the right boundary of $c'$ is above $c$ and the left boundary of $c$ is below $c'$ and both boundaries have the same color. If such common color is blue, the edge is directed from $c'$ to $c$; otherwise, it is directed from $c$ to $c'$.

Then according to (i) and (ii), we get the cluster quiver $Q$ from $\Omega$.

The mutable vertices of $Q(\Omega)$ are the chambers in $\Omega$ are bounded. Then the frozen vertices are the chambers which are unbounded. The following figure on the left shows the quiver associated to the double wiring diagram in Figure $4$, where the vertices surrounded by $\Box$ are frozen and the others are mutable.

\begin{figure}[h] \centering
  \includegraphics*[109,428][514,547]{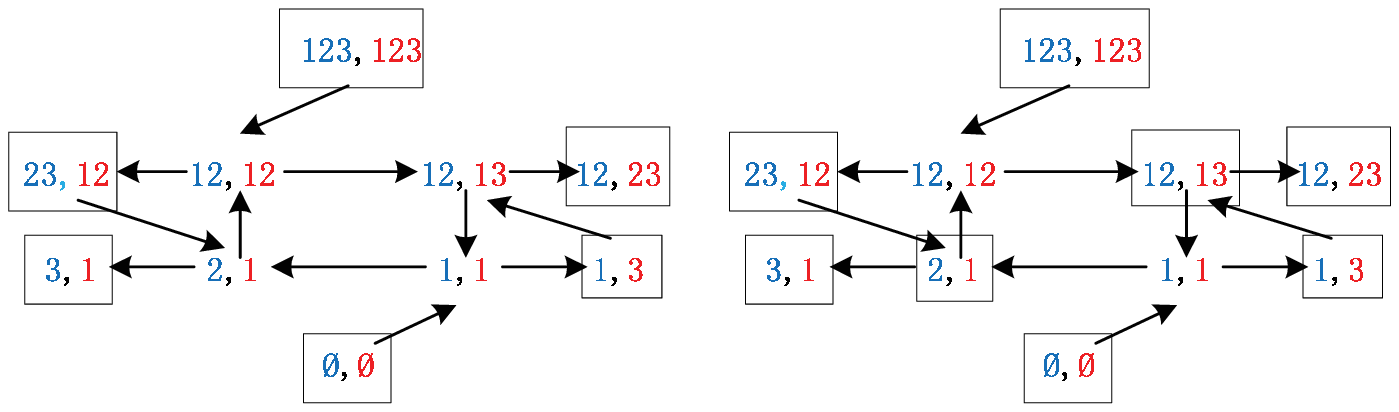}

 Figure 5
\end{figure}

In \cite{[1]}, A.Berenstein, S.Fomin and A.Zelevinsky showed that all the $\binom{2n}{n}-1$ minors of a matrix $M$ occurs as cluster variables in the cluster algebra $\mathcal{A}(\Omega)$ generated by the initial seed $(Q(\Omega),{\bf z}(\Omega))$ associated to a double wiring diagram $\Omega$, where ${\bf z}(\Omega)$ is the collection of all chamber minors in $\Omega$ and thus it is enough to check the positivity of these $n^2$ chamber minors in ${\bf z}(\Omega)$ to make sure of totally positivity of $M$.

\subsection{Cluster subalgebras and positive definite matrices.\\}

In what follows we replace the ``totally positive matrices'' by other classes of matrices which share some properties about ``partial positivity'', such as positive definite matrix, $P$-matrix and so on.

In order to give a similar result for a positive definite matrix, we first introduce some notations. Let $\mathcal{A}=\mathcal{A}(Q,{\bf z})$ be the cluster algebra generated by an initial seed $t=(Q,{\bf z})$ which is defined in Definition \ref{def1.3}. Let $Q_{0}=Q_{0,m}\cup Q_{0,f}$ and ${\bf z}={\bf z}_{m}\cup{\bf z}_{f}$, where $Q_{0},Q_{0,m},Q_{0,f}$ represents the vertices, mutable vertices, frozen vertices of $Q$ respectively and ${\bf z}_{m}, {\bf z}_{f}$ corresponds to the mutable part, frozen part of ${\bf z}$ respectively. Now we raise the definition of the so-called \emph{cluster subalgebra}.

\begin{Def}\label{def3.1}
{\rm Let us keep our foregoing notations. For a subset $S\subseteq Q_{0,m}$, we define the corresponding \emph{cluster subalgebra} associated to $S$ as the cluster algebra $\mathcal{A'}_{S}$ generated by the initial seed $t'=(Q',{\bf z'})$, where $Q'=Q, Q'_{0,m}=Q_{0,m}\backslash S, Q'_{0,f}=Q_{0,f}\cup S$ and $z'_{m}=z_{m}\backslash z_{S},z'_{f}=z_{f}\cup z_{S}$.}
\end{Def}

\begin{Ex}\label{ex3.2}
{\rm Let $\mathcal{A}=\mathcal{A}(\Omega)$, where $\Omega$ is the double wiring diagram on the left of Figure $5$. Then by choosing $S=\{12,13; ~2,1\}$, the corresponding cluster subalgebra $\mathcal{A'}_{S}=\mathcal{A}(\Omega')$, where $\Omega'$ is the double wiring diagram on the right of Figure $5$.}
\end{Ex}

From this example on the $3\times 3$ Hermitian matrix $M$, it is easy to see that in order to test whether $M$ is positive definite or not, it suffices to check only the positivity of any extended cluster from the cluster subalgebra $\mathcal{A}(\Omega')$.

Moreover, for any $n\times n$ Hermitian matrix, we can obtain the following similar conclusion.

\begin{Thm}\label{thm3.3}
{\rm For any $n\times n$ Hermitian matrix $M$, one can construct a double wiring system $\Omega$ and a certain subset $S$ of $Q(\Omega)_{0,m}$ such that the matrix $M$ is positive definite if and only if all minors correspondig to any extended cluster of the cluster subalgebra $\mathcal{A'}_{S}$ of $\mathcal{A}_{\Omega}$ on $M$ are positive.}
\end{Thm}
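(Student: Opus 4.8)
The plan is to reduce positive definiteness to a test involving only principal minors, and then to design $\Omega$ and $S$ so that the extended clusters of $\mathcal{A'}_{S}$ consist precisely of such principal minors. The starting observation is that an $n\times n$ Hermitian matrix $M$ is positive definite if and only if every principal minor $\Delta_{I,I}(M)$ with $\emptyset\neq I\subseteq[1,n]$ is a positive real number: indeed every principal submatrix of a positive definite matrix is again positive definite, while conversely the positivity of the leading principal minors $\Delta_{[1,k],[1,k]}(M)$, $k=1,\dots,n$, already forces $M$ to be positive definite by the leading-principal-minor criterion recalled in Section $2$. I stress at the outset that these principal minors are real, whereas a general minor $\Delta_{I,J}(M)$ with $I\neq J$ is typically complex; hence any cluster-theoretic positivity test for a Hermitian $M$ must be arranged so that only principal minors are ever tested.

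First I would build the ambient double wiring diagram. Extending the $n=3$ model underlying Figure $4$ and Example \ref{ex3.2} to arbitrary $n$, I would choose a double wiring diagram $\Omega$ whose set of chamber minors contains all $n$ leading principal minors $\Delta_{[1,k],[1,k]}$, $k=1,\dots,n$, and then form its quiver $Q(\Omega)$ and cluster algebra $\mathcal{A}_{\Omega}=\mathcal{A}(Q(\Omega),{\bf z}(\Omega))$ exactly as in Subsection $3.1$. Next I would specify the subset $S\subseteq Q(\Omega)_{0,m}$: following the recipe of Definition \ref{def3.1}, $S$ is taken to be the collection of mutable chambers that must be frozen so that, in $\mathcal{A'}_{S}$, the leading principal minors are transferred into the frozen part ${\bf z'}_{f}={\bf z}_{f}\cup {\bf z}_{S}$, and so that the resulting cluster subalgebra is itself the cluster algebra $\mathcal{A}(\Omega')$ of a double wiring system $\Omega'$ all of whose chamber minors are principal minors --- precisely the phenomenon exhibited for $n=3$ in Example \ref{ex3.2}.

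Granting this construction, the equivalence would follow quickly. For the forward implication, if $M$ is positive definite then every principal minor $\Delta_{I,I}(M)$ is positive by the reduction above; since every cluster variable and frozen variable occurring in any extended cluster of $\mathcal{A'}_{S}$ is, by construction, a principal minor, all minors attached to that extended cluster are positive. For the reverse implication, the leading principal minors $\Delta_{[1,k],[1,k]}$ lie in the frozen part of $\mathcal{A'}_{S}$ and therefore belong to every extended cluster; hence if all minors of some extended cluster are positive, then in particular all leading principal minors are positive, and $M$ is positive definite. The main obstacle is the combinatorial claim buried in the choice of $S$: one must show, for every $n$, that $S$ can be selected so that the entire cluster subalgebra $\mathcal{A'}_{S}$ never produces a non-principal (hence possibly complex) minor under any sequence of seed mutations --- equivalently, that $\Omega'$ can be arranged to have only principal chamber minors while still retaining all $n$ leading principal minors. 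I expect to establish this by induction on $n$, using the base case $n=3$ of Example \ref{ex3.2} together with the local description of mutations in terms of the chamber structure given in Subsection $3.1$, and checking that each admissible mutation of $\mathcal{A'}_{S}$ carries a principal minor to another principal minor via the relevant three-term (short Plücker-type) exchange relation $zz'=\prod_{z\leftarrow x}x+\prod_{z\rightarrow y}y$ of Definition \ref{def1.3}.
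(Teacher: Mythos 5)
Your overall frame --- reduce positive definiteness to the leading principal minors, build a double wiring diagram $\Omega$ whose chamber minors include $\Delta_{1,1},\Delta_{12,12},\dots$, then pass to a cluster subalgebra --- matches the paper, but your choice of $S$ is exactly opposite to the paper's, and this matters. The paper constructs $\Omega$ so that in each of the $n-1$ rows all blue crossings lie to the left of all red crossings; this forces the $n-1$ chambers with blue left boundary and red right boundary to carry precisely the labels $(1\cdots k,\,1\cdots k)$, $k=1,\dots,n-1$, and it then takes $S=Q(\Omega)_{0,m}\setminus\{(1,1);\,(12,12);\,\dots;\,(1\cdots(n-1),1\cdots(n-1))\}$. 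Thus in $\mathcal{A'}_{S}$ the leading principal minors remain \emph{mutable} and everything else is frozen, and the equivalence is deduced not from any claim that extended clusters consist of principal minors, but from the facts that $(1\cdots n,1\cdots n)$ is frozen and that each leading principal minor is expressible as a rational function with positive coefficients (subtraction-free) in the variables of \emph{any} extended cluster of $\mathcal{A'}_{S}$; positivity of one extended cluster therefore certifies positivity of all $n$ leading principal minors, hence positive definiteness.

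You instead place the leading principal chambers \emph{into} $S$ (freezing them) and then need the combinatorial claim you yourself flag as the main obstacle: that the mutable remainder can be arranged so that no mutation ever produces a non-principal minor, ``equivalently, that $\Omega'$ has only principal chamber minors.'' This is unattainable. Every double wiring diagram has unbounded bottom chambers whose minors are single off-diagonal entries --- in Figure $4$ these are $x_{31}$ and $x_{13}$, and in general $x_{n1}$ and $x_{1n}$ occur; such chambers are frozen, remain frozen in every cluster subalgebra by Definition \ref{def3.1}, and hence lie in every extended cluster while being non-principal. So the invariant your induction would need is false already at its base, and in addition mutation at non-principal bounded chambers generically yields non-principal minors (for $n\geq 3$ even cluster variables that are not minors of $M$ at all), so ``principal maps to principal'' under the exchange relation fails as well. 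Note also that Example \ref{ex3.2}, which you cite as your model, actually follows the paper's pattern rather than yours: there $S=\{(12,13);\,(2,1)\}$ freezes \emph{non-principal} chambers, leaving the principal ones mutable. (To be fair, the strict forward implication ``$M$ positive definite $\Rightarrow$ all extended-cluster minors positive'' is delicate even for the paper's construction --- take $M=I_{n}$, for which the frozen chamber minor $x_{n1}$ vanishes --- but the paper's argument rests essentially on the backward direction via subtraction-free expressibility, whereas your argument structurally requires the impossible ``all principal'' property and so cannot be repaired along the route you propose.)
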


\begin{proof}
Firstly, we construct the double wiring diagram $\Omega$ as follows: For each row
between two wires, all the blue crossings are on the left side of the red crossings.
Then there are $n-1$ such rows and for each row, there exists an unique chamber with the blue left boundary and the red right boundary. Thus we get $n-1$ such chambers, which correspond by one-to-one to the $n-1$ minors $\Delta_{1,1},\Delta_{12,12},\cdots,\Delta_{12\cdots (n-1), 12\cdots (n-1)}$, that is, $\{1,1;~ 12,12;~ \cdots;~ 12\cdots (n-1), 12\cdots (n-1)\}=Q(\Omega)_{0,m}\backslash S$. Now we choose $S$ be $S=Q(\Omega)_{0,m}\backslash \{1,1;~ 12,12;~ \cdots;~ 12\cdots (n-1), 12\cdots (n-1)\}$.

Finally, the conclusion follows by the fact that $\{12\cdots n, 12\cdots n\}\in Q(\Omega)_{0,f}$ and $M$ is positive definite if and only if all of its leading principal minors are all positive and all its leading principal minors can be expressed as the rational functions of minors corresponds to the extended clusters of $\mathcal{A'}_{S}$ with positive coefficients.
\end{proof}

\bigskip
\section{Further Remarks as Summary.\\}

For any $n\times n$ matrix $M$, on one hand, all its $\binom{2n}{n}-1$ minors share the cluster algebra structure via a double wiring diagram $\Omega$ and its associated quiver $Q(\Omega)$. Thus in order to check some ``positivity'' properties about the minors of $M$, it suffices to check the ``positivity'' of some minors which appears in any clusters in the corresponding cluster subalgebra. On the other hand, for the matrix $M$, there exists a planar network $($$\Gamma$,$\omega$$)$ such that its weight matrix $x(\Gamma,\omega)=M$ and we can calculate any minor $\Delta_{I,J}$ of $M$ for $I,J\subseteq [1,n]$ by Lemma \ref{lem1.1}. Therefore, we conclude this paper by the following Figure.

\begin{figure}[h] \centering
  \includegraphics*[154,482][532,582]{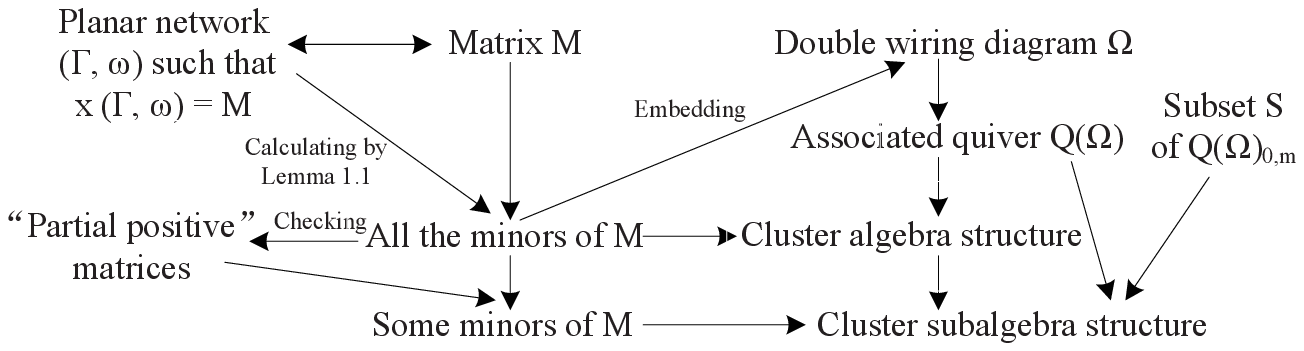}

 Figure 6
\end{figure}

\vspace{5mm}

{\bf Acknowledgements:}\; This project is supported by the National Natural Science
Foundation of
China (No.11271318, No.11171296 and No.J1210038) and the Specialized Research Fund for the
Doctoral Program of Higher Education of China (No.20110101110010) and the Zhejiang
Provincial
Natural Science Foundation of China (No.LZ13A010001).

\end{document}